\newtheorem{definition}{Definition}
\newtheorem{theorem}[definition]{Theorem}
\newtheorem{lemma}[definition]{Lemma}
\newcommand{\comment}[1]{}
\newcommand{\C}{\mathcal C}
\newcommand{\emtext}[1]{\text{\em #1}}
\newcommand{\sm}{\setminus}
\newcommand{\id}{\text{\rm id}}
\newcommand{\rk}{\text{\rm rk}}
\title{Twins of rayless graphs}
\author{Anthony Bonato \and Henning Bruhn \and Reinhard Diestel \and Philipp Spr\"ussel}
\date{}
\begin{document}
\maketitle

\begin{abstract}
  Two non-isomorphic graphs are twins if each is isomorphic to a subgraph of the
  other. We prove that a rayless graph has either infinitely many
  twins or none.
\end{abstract}

\section{Introduction}

Up to isomorphism, the subgraph relation $\subseteq$ is antisymmetric
on finite graphs: If a finite graph $G$ is (isomorphic to) a subgraph
of $H$, i.e.\ $G\subseteq H$, and if also $H\subseteq G$, then $G$ and
$H$ are isomorphic. For infinite graphs this need no longer be the
case, see Figure~\ref{fig:stars}. Two non-isomorphic
graphs $G$ and $H$ are \emph{weak twins} if $G$ is isomorphic to a
subgraph of $H$ and vice versa, and \emph{strong twins} if both these
subgraph embeddings are induced. When $G$ and $H$ are trees the two
notions coincide, and we just speak of \emph{twins}.

\begin{figure}[ht]
  \centering
  %\noindent\epsfbox{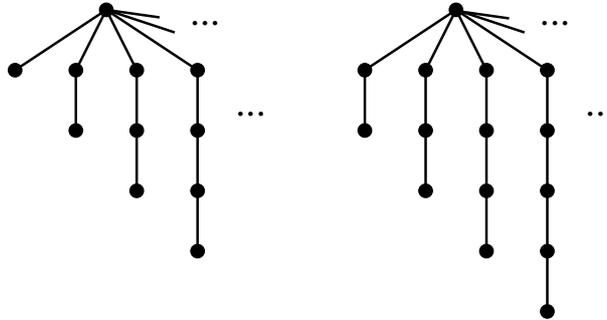}
  \includegraphics{startwins}
  \caption{Each of the two graphs is a subgraph of the other.}
  \label{fig:stars}
\end{figure}

The trees in Figure~\ref{fig:stars} are twins, and by deleting some
of their leaves we can obtain infinitely many further trees that are
twinned with them. On the other hand, no tree is a twin of the infinite star.
Bonato and Tardif~\cite{BonTar06} conjectured that every tree is
subject to this dichotomy: that it has either infinitely many trees as twins or none. They call this the {\em tree alternative conjecture}.

In this paper we prove the corresponding assertion for \emph{rayless} graphs, graphs that contain no infinite path:

\begin{theorem}
  \label{mainthm}
  The following statements hold with both the weak and the strong
  notion of `twin'.
  \begin{enumerate}
  \item
  	A rayless graph has either infinitely many twins or none.
  \item
  	A connected rayless graph has either infinitely many connected
  	twins or none.
  \end{enumerate}
\end{theorem}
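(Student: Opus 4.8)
The plan is to argue by transfinite induction on the Schmidt rank $\rk(G)$ of a rayless graph: finite graphs have rank $0$, and $\rk(G)$ is the least ordinal $\alpha$ for which some finite set $S\subseteq V(G)$ leaves every component of $G-S$ of rank $<\alpha$. Since a graph is rayless exactly when such a rank exists, induction on $\rk(G)$ is available for every $G$ we consider. For the base case, finite graphs have no twins at all, because $\subseteq$ is antisymmetric up to isomorphism on finite graphs, so the dichotomy holds vacuously at rank $0$. I would prove (i) and (ii) by a single induction, with the connected statement (ii) as the core and the disconnected statement (i) reduced to it by tracking how the mutual embeddings act on the components, each of which is a rayless graph of no larger rank. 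Throughout I treat the weak and the strong notion in parallel, reading ``embedding'' as ``induced embedding'' in the strong case; the surgeries below will be arranged to preserve inducedness, so both notions pass through the same argument.

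For the inductive step, let $G$ be connected of rank $\alpha>0$ with a twin $H$, witnessed by embeddings $f\colon G\to H$ and $g\colon H\to G$. The first observation is that $\psi:=g\circ f$ is a self-embedding of $G$ that is \emph{not} an automorphism: if it were, then tracing edges backwards through $\psi$ forces $f$ to be vertex- and edge-bijective, hence an isomorphism $G\cong H$, contradicting $G\not\cong H$. Thus $G$ embeds as a proper subgraph of itself. I then fix finite sets $S\subseteq V(G)$ and $S_H\subseteq V(H)$ witnessing the two ranks and classify the components of $G-S$ by their \emph{rooted type}, i.e.\ their isomorphism type together with the way they attach to $S$. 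Because $S$ and $S_H$ are finite, the mutual embeddings move all but finitely much of this component structure back and forth, and I expect the induced correspondence on rooted types to exhibit genuine slack somewhere, since otherwise a finite adjustment near the separators would upgrade $f$ and $g$ to an isomorphism.

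From this slack I would manufacture infinitely many twins in one of two ways. If some component $C$ of $G-S$ is itself a twin of another graph, then by the inductive hypothesis at rank $\rk(C)<\alpha$ it has infinitely many twins $C_1,C_2,\dots$; replacing one copy of $C$ by $C_i$ along the same rooted interface produces graphs $G_1,G_2,\dots$, each mutually embeddable with $G$ because $C\hookrightarrow C_i\hookrightarrow C$. If instead every component is \emph{rigid} (has no twin, so by induction none has infinitely many twins), then the non-isomorphism of $G$ and $H$ must live in the multiset of rooted types, and mutual embeddability forces some type to occur infinitely often; I would use this infinite multiplicity to relocate a finite amount of structure at $S$ — the higher-rank analogue of the leaf redistribution indicated in Figure~\ref{fig:stars} — obtaining a family $G_n$ indexed by a finite parameter ranging over infinitely many values.

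The main obstacle is to guarantee that such a family consists of \emph{pairwise non-isomorphic} graphs that nonetheless remain mutually embeddable with $G$. Infinite multiplicities make the embeddings easy but make non-isomorphism delicate, since absorbing finitely many altered pieces into an infinite supply can collapse the whole family into a single isomorphism type. The crux is therefore to isolate an isomorphism invariant that genuinely varies along the family — typically a count of certain rigid rooted types, or of certain configurations at the separator, that the mutual embeddings are provably forced to preserve — and it is here that the rigidity analysis and the finiteness of $S$ must be combined with care. Two bookkeeping constraints ride along with this: every surgery must be performed by induced embeddings so that the strong case goes through verbatim, and in part~(ii) every replacement or relocation must keep the graph connected, which restricts how components may be re-attached to $S$.
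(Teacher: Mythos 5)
Your skeleton matches the paper's (transfinite induction on the Schmidt rank, surgery on the components outside a finite separator, a dichotomy between a non-rigid component and all-rigid components), but the plan breaks down at exactly the point where the real work happens: nothing in your setup forces the embeddings to respect the decomposition. An arbitrary finite set $S$ witnessing the rank is not canonical, so $f$ and $g$ need not carry components of $G-S$ anywhere near components of $H-S_H$, and your ``correspondence on rooted types'' is unfounded. The paper fixes this by working with the \emph{kernel} $K(G)$, the unique minimal separating set, proving (Lemma~\ref{fixedkernels}) that mutual injective homomorphisms map kernel onto kernel, and then replacing $\phi$ by $\phi\circ\iota^{k-1}$ so that $\iota=\psi\circ\phi$ restricts to the identity on $\bar X=X\cup K(G)$, as in~\eqref{idonX}. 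Relatedly, your induction hypothesis is the unrooted statement, but when you recurse into a component $C$ you need infinitely many twins of the \emph{rooted pair} $(C,\bar X)$ with embeddings inducing the identity on the root; plain twins of the abstract graph $C$ need not respect the attachment. This is why the paper strengthens the induction to Lemma~\ref{mainlem} about pairs $(G,X)$ with finite $X$.

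Both of your surgery cases are also under-specified where the difficulty lies, and one step is wrong. In the non-rigid case, replacing a \emph{single} copy of $C$ by $C_i$ need not yield pairwise non-isomorphic graphs: with infinitely many identity-twinned copies of $C$ present, the altered piece can be absorbed and the family collapses --- an obstacle you name in your last paragraph but do not overcome. The paper instead replaces the \emph{entire} class $\mathcal T$ of components identity-isomorphic or identity-twinned to $C_0$, which makes $|I_{G_j}(C_j)|\neq 0=|I_{G_k}(C_j)|$ for $j\neq k$, and then uses Lemma~\ref{isolem} together with the finiteness of the automorphisms of $\bar X$ to extract infinitely many non-isomorphic twins. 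In the rigid case, your claim that mutual embeddability ``forces some type to occur infinitely often'' is unjustified and the proof cannot lean on it: the paper only extracts a multiplicity discrepancy $|I_G(A)|>|I_H(B)|$ and must treat finite $I_G(A)$ --- where twins are built by \emph{adding} copies of $A$, which requires the auxiliary digraph $\Gamma$ on $\C_G$, the partition $(\mathcal A^-,\mathcal A,\mathcal A^+)$ of~\eqref{plusminus}, and the self-embedding $\gamma$ of~\eqref{makefree} whose image avoids $\bigcup\mathcal A$ so the enlarged graph re-embeds in $G$ --- separately from infinite $I_G(A)$, where twins are built by \emph{deleting} all but finitely many members of $I_G(A)$ and re-embedding via $\beta$ of~\eqref{makefreeinf}. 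The invariant you correctly identify as the crux (a count that varies along the family yet is provably preserved) is exactly $|I_{G_i}(A)|$ combined with the kernel-fixing machinery, and supplying it is the paper's main technical content, not a detail to be deferred.
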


\noindent
We do not know of any counterexamples to the corresponding statements for arbitrary graphs, rayless or not.

\medbreak

Note that the `strong twin' version of Theorem~\ref{mainthm} does not
directly imply the `weak twin' version. Indeed, consider the complete
bipartite graph $K_{2,\infty}$ with one partition class consisting of
two and the other of (countable-)infinitely many vertices. By deleting any edge we
obtain a weak twin of $K_{2,\infty}$. However, it is straightforward
to check that $K_{2,\infty}$ has no strong twin.

We have stressed in the theorem that for a connected rayless graph we
may restrict ourselves to twins that are also connected. This is indeed
a stronger statement: For example, an infinite star has disconnected
weak twins---add isolated vertices---but no connected ones. We do not
know whether the same can occur for strong twins.

\medbreak
Twins were first studied in~\cite{BonTar03}.
The tree alternative conjecture was formulated in~\cite{BonTar06},
where it was proved in the special case of rayless trees. (Note that
Theorem~\ref{mainthm} reproves this case.) Most of the
work there was spent on showing that the conjecture holds for
\emph{rooted} rayless trees, which motivated Tyomkyn~\cite{Tyomkyn08}
to verify the conjecture for arbitrary rooted trees. Moreover, Tyomkyn
established the tree alternative conjecture for certain types of
locally finite trees. (A graph is \emph{locally finite} if all its
vertices have finite degree.) A~proof of the conjecture for arbitrary
unrooted locally finite trees has remained elusive.

In~\cite{Tyomkyn08}, a slightly different approach is outlined as
well. If a graph $G$ has a twin, then mapping $G$ to that twin and
back embeds it as a proper subgraph in itself. Tyomkyn conjectures
that, with the exception of the ray, every locally finite tree that
is a proper subgraph of itself has infinitely many twins.
%Self-embeddings are studied by Halin~\cite{Halin73}.

In this paper we consider only embeddings as subgraphs or induced
subgraphs, leading to weak or strong twins. It seems natural, however,
to ask a similar question for other relations on graphs, such as the
minor relation or the immersion relation. Does a graph always have
either infinitely many `minor-twins' or none at all? Conceivably, the
question of when a graph is a proper minor of itself, as is claimed
for countable graphs by Seymour's {\em self-minor conjecture}, should play a
role in this context. 
The self-minor conjecture is described in Chapter 12.5 in~\cite{DiestelBook05};
partial results are due to Oporowski~\cite{Opo90} and Pott~\cite{JulianSMC}.
In related work, Oporowski~\cite{Opo99}
characterises the minor-twins of the infinite grid, and
Matthiesen~\cite{Lillian06} studies a complementary question with
respect to the topological minor relation, restricted to rooted
locally finite trees.

\medskip
In the next section we introduce a recursive technique for handling
rayless graphs, which we will use in Section~\ref{proofs} to prove
Theorem~\ref{mainthm}.

\section{A rank function for rayless graphs}

All our graphs are simple. For general graph theoretical concepts
and notation we refer the reader to~\cite{DiestelBook05}.

Our proof of Theorem~\ref{mainthm} is based on a construction by
Schmidt~\cite{schmidt83} (see also Halin~\cite{halin98} for an
exposition in English) that assigns an ordinal $\rk(G)$, the
\emph{rank of $G$}, to all rayless graphs $G$ as follows:

\begin{definition}\label{rank}
  Let $\rk(G)=0$ if and only if $G$ is a finite graph. Then
  recursivley for ordinals $\alpha>0$, let $\rk(G)=\alpha$ if and only
  if
  \begin{enumerate}
  \item
    $G$ has not been assigned a rank smaller than $\alpha$; and
  \item\label{separate}
    there is a finite set $S\subseteq V(G)$ such that every component
    of $G-S$ has rank smaller than $\alpha$.
  \end{enumerate}
\end{definition}

It is easy to see that the graphs that receive a rank are precisely
the rayless ones.
The rank function makes the class of rayless graphs accessible to induction
proofs. One of the first applications of the rank was the proof of the
reconstruction conjecture restricted to rayless trees by Andreae and
Schmidt~\cite{AnSch84}. Recently, the rank was used to verify the
unfriendly partition conjecture for rayless graphs,
see~\cite{unfrayless}.

We shall need a few properties of the rank function that are either simple
consequences of the definition or can be found in~\cite{halin98}.
Let $G$ be an infinite rayless graph, and let $S$ be minimal among the
sets as in~\ref{separate} of Definition~\ref{rank}. It is not hard to
see that $S$ is unique with this property. We call $S$ the
\emph{kernel of $G$} and denote it by $K(G)$. Furthermore, it holds
that:
\begin{itemize}
\item
  if $H$ is a subgraph of $G$, then $\rk(H) \le \rk(G)$; and
\item
  if $G$ is connected, then $K(G)$ is non-empty; and
\item
  $\rk(G-X)=\rk(G)$ for any finite $X\subseteq V(G)$.
\end{itemize}
In particular, if $C$ is a component of $G-K(G)$, then $G[C\cup K(G)]$
has smaller rank than $G$.

To illustrate the definition of the rank, let us note that an infinite
star has rank~$1$, and its kernel consists of its centre. The same
holds for the graphs in Figure~\ref{fig:stars}. On the other hand, the
disjoint union of infinitely many infinite stars (or in fact, of any
graphs of rank $1$) has rank~$2$ and an
empty kernel.

\section{The proofs}
\label{proofs}

In this section we prove the `strong twin' version of Theorem~\ref{mainthm}.
All proofs will apply almost literally to the case of weak twins
instead of strong twins. For that reason we will often drop the
qualifiers `strong' and `weak'.

Let $G,H$ be two rayless graphs and let $X\subseteq V(G)$ and
$Y\subseteq V(H)$ be finite vertex subsets. 
We call a homomorphism
$\phi:G\to H$ a \emph{strong embedding of $(G,X)$ in $(H,Y)$} 
if it is injective, $\phi(G)$ is an induced subgraph of $H$, and
$\phi(X)\subseteq Y$. 
Alternatively, we shall say that $\phi:(G,X)\to (H,Y)$ is 
a strong embedding.
Observe that  $\phi$  preserves edges as well as non-edges. 
We call $(G,X)$ and
$(H,Y)$ \emph{isomorphic} if there is an isomorphism $\gamma:(G,X)\to
(H,Y)$, i.e.\ if $\gamma$ is a graph-isomorphism between $G$ and $H$ with $\gamma(X)=Y$. 
We say that $(G,X)$ and $(H,Y)$ are
\emph{strong twins} if they are not isomorphic and there exist strong
embeddings $\phi:(G,X)\to (H,Y)$ and $\psi:(H,Y)\to (G,X)$; note that $\phi(X)= Y$ and
$\psi(Y)=X$ in this case. For $(G,X)$ and $(H,Y)$ to be \emph{weak
twins} we only require $\phi$ and $\psi$ to be injective homomorphisms
with $\phi(X)= Y$ and $\psi(Y)=X$. Let us point out that rayless
graphs $G$ and $H$ are (strong resp.\ weak) twins if and only if the
tuples $(G,\emptyset)$ and $(H,\emptyset)$ are (strong resp.\ weak)
twins.

\medskip

As we have noted, subgraphs of rayless graphs do not have larger rank.
Moreover, if a subgraph $G'$ of a rayless graph $G$ has the same rank
as $G$, then $K(G')\subseteq K(G)$ since $K(G)\cap V(G')$ is a set as
in~\ref{separate} of Definition~\ref{rank}. We thus have:

\begin{lemma}\label{fixedkernels}
  Let $G$ and $H$ be rayless graphs, and let there be injective
  homomorphisms $\phi:G\to H$ and $\psi:H\to G$. Then $\phi(K(G))=
  K(H)$ and $\psi(K(H))=K(G)$.
\end{lemma}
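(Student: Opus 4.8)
The plan is to derive both equalities from the two facts recorded just before the statement: that rank never increases under taking subgraphs, and that an equal-rank subgraph $G'$ of a rayless graph $G$ satisfies $K(G')\subseteq K(G)$. We may assume $G$ and $H$ are infinite, since otherwise each embedding forces $|V(G)|=|V(H)|<\infty$, both graphs have rank $0$, and there are no (nonempty) kernels to discuss.

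First I would note that an injective homomorphism is an isomorphism onto its image: $\phi$ maps $G$ isomorphically onto the subgraph $\phi(G)\subseteq H$. Consequently $\rk(\phi(G))=\rk(G)$, and since the kernel is an isomorphism invariant we have $K(\phi(G))=\phi(K(G))$; the analogous statements hold for $\psi$. Chaining the subgraph bounds $\rk(G)=\rk(\phi(G))\le\rk(H)=\rk(\psi(H))\le\rk(G)$ gives $\rk(G)=\rk(H)$. Thus $\phi(G)$ is a subgraph of $H$ of the same rank, and the quoted fact yields $\phi(K(G))=K(\phi(G))\subseteq K(H)$; by symmetry $\psi(K(H))\subseteq K(G)$.

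It remains to upgrade these two inclusions to equalities, and here the finiteness of the kernels does the real work. Applying $\psi$ to the first inclusion and then invoking the second gives $\psi\phi(K(G))\subseteq\psi(K(H))\subseteq K(G)$, so $\psi\phi$ restricts to an injective self-map of the finite set $K(G)$; any injection of a finite set into itself is a bijection, whence $\psi\phi(K(G))=K(G)$. The chain $K(G)=\psi\phi(K(G))\subseteq\psi(K(H))\subseteq K(G)$ then collapses, forcing $\psi(K(H))=K(G)$. Running the identical argument with the roles of $\phi$ and $\psi$ interchanged—so that $\phi\psi$ becomes an injective self-map of the finite kernel $K(H)$—yields $\phi(K(G))=K(H)$.

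The only genuine obstacle is this passage from inclusion to equality; everything preceding it is bookkeeping with the stated rank properties. The point to get right is that the kernels are \emph{finite}, so that an injective endomorphism of a kernel is automatically surjective—this is precisely what makes the two inclusions tight. I would also observe that every step uses only that $\phi$ and $\psi$ are injective homomorphisms, never that their images are induced, so the same proof serves the weak and the strong versions without change.
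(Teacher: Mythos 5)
Your proof is correct and follows essentially the same route as the paper, which derives the lemma directly from the two stated rank facts (rank is monotone under subgraphs, and an equal-rank subgraph has its kernel contained in the ambient kernel), with equality then forced by injectivity on the finite kernels. You merely spell out the inclusion-to-equality step (via $\psi\circ\phi$ being an injective self-map of the finite set $K(G)$) that the paper leaves implicit in its ``We thus have''.
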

In particular, the lemma implies that if $(G,X)$ and $(H,Y)$ are 
twins, then $(G,X\cup K(G))$ and $(H,Y\cup K(H))$ are twins too.
\medbreak

Let $G$ and $H$ be rayless graphs, and let $X\subseteq V(G)$ and
$Y\subseteq H$ be finite vertex sets. We write $\bar X$ as a
shorthand for $X \cup K(G)$, and define $\bar Y$ analogously. Assume
there are (strong) embeddings $\phi:(G,X)\to (H,Y)$ and $\psi:(H,Y)\to (G,X)$ and set
$\iota:= \psi\circ\phi$. Since, by Lemma~\ref{fixedkernels}, $\iota$
induces an automorphism on (the subgraph induced by)
the finite set $\bar X$ there exists a $k$ with
$\iota^k\restriction\bar X=\id_{\bar X}$. By replacing $\phi$ with
$\phi\circ\iota^{k-1}$, we may assume that
\begin{equation}\label{idonX}
  \begin{minipage}[c]{0.8\textwidth}\em
    $\phi:(G,X)\to (H,Y)$ and $\psi:(H,Y)\to (G,X)$ are embeddings so
    that the restriction of $\iota=\psi\circ\phi$ to $\bar X$
    coincides with $\id_{\bar X}$. 
  \end{minipage}\ignorespacesafterend
\end{equation}

Assume now that $(G,X)$ and $(H,Y)$ are isomorphic by virtue of an
isomorphism $\gamma$. In that case, abusing symmetry and notation,
let us write $(G,X)\simeq_\eta (H,Y)$, where $\eta$ denotes the
isomorphism $X\to Y$ induced by $\gamma$. Denote by $\C_G$ the set of
all subgraphs $G[C\cup\bar X]$ of $G$, where $C$ is a component of
$G-\bar X$. For $A\in\C_G$ set
\begin{equation*}
  I_G(A):=\{D\in\C_G : (D,\bar X)\simeq_\id(A,\bar X)\}.
\end{equation*}

\begin{lemma}
  \label{isolem}
  Let $G$ and $H$ be rayless graphs, and let $X\subseteq V(G)$ and
  $Y\subseteq V(H)$ be finite. The following statements are
  equivalent.
  \begin{enumerate}
  \item\label{isotuples}
	$(G,X)$ and $(H,Y)$ are isomorphic.
  \item\label{compbij}
  	There is a bijection $\alpha:\C_G\to\C_H$ and an isomorphism
  	$\eta:G[\bar X]\to G[\bar Y]$ with $\eta(X)=Y$ so that
  	$(A,\bar X)\simeq_{\eta}(\alpha(A),\bar Y)$ and $|I_G(A)|=
  	|I_H(\alpha(A))|$ for all $A\in\C_G$.
  \end{enumerate}
  Moreover, if~\ref{isotuples} and~\ref{compbij} hold, then $\alpha$,
  $\eta$, and the isomorphism $\phi:(G,X)\to (H,Y)$ can be
  chosen so that $\phi\restriction\bar X=\eta$ and $\phi(A)=\alpha(A)$
  for every $A\in\C_G$.
\end{lemma}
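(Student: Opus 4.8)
My plan is to prove the two implications \ref{isotuples}$\Rightarrow$\ref{compbij} and \ref{compbij}$\Rightarrow$\ref{isotuples} separately, and to read off the \emph{Moreover} part directly from the construction used in the second implication.

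For \ref{isotuples}$\Rightarrow$\ref{compbij} I would start with an isomorphism $\gamma\colon(G,X)\to(H,Y)$. Applying Lemma~\ref{fixedkernels} to $\gamma$ (and $\gamma^{-1}$) gives $\gamma(K(G))=K(H)$, and together with $\gamma(X)=Y$ this yields $\gamma(\bar X)=\bar Y$. Hence $\eta:=\gamma\restriction\bar X$ is an isomorphism $G[\bar X]\to H[\bar Y]$ with $\eta(X)=Y$. Since $\gamma$ then restricts to an isomorphism $G-\bar X\to H-\bar Y$, it carries the components of $G-\bar X$ bijectively onto those of $H-\bar Y$, so setting $\alpha(G[C\cup\bar X]):=H[\gamma(C)\cup\bar Y]$ defines a bijection $\alpha\colon\C_G\to\C_H$, and restricting $\gamma$ to each $A\in\C_G$ witnesses $(A,\bar X)\simeq_\eta(\alpha(A),\bar Y)$. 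The one point needing care is the multiplicity condition $|I_G(A)|=|I_H(\alpha(A))|$, and I expect this conjugation-by-$\eta$ argument to be the main (and essentially the only non-formal) obstacle in this direction: I would show that $\alpha$ maps $I_G(A)$ bijectively onto $I_H(\alpha(A))$. Given $D\in I_G(A)$ witnessed by $\delta\colon D\to A$ with $\delta\restriction\bar X=\id_{\bar X}$, and writing $\gamma_D,\gamma_A$ for the restrictions of $\gamma$ to $D,A$ (both equal to $\eta$ on $\bar X$), the composite $\gamma_A\circ\delta\circ\gamma_D^{-1}\colon\alpha(D)\to\alpha(A)$ restricts on $\bar Y$ to $\eta\circ\id\circ\eta^{-1}=\id_{\bar Y}$, so $\alpha(D)\in I_H(\alpha(A))$; the same argument for $\gamma^{-1}$ gives the reverse inclusion, hence equality of sizes.

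For \ref{compbij}$\Rightarrow$\ref{isotuples} I would build $\phi$ explicitly. For each $A\in\C_G$ fix an isomorphism $\gamma_A\colon A\to\alpha(A)$ with $\gamma_A\restriction\bar X=\eta$, as supplied by $(A,\bar X)\simeq_\eta(\alpha(A),\bar Y)$; here I do not even need the multiplicity condition, only the bijection $\alpha$ and the isomorphism types. Define $\phi$ to agree with $\eta$ on $\bar X$ and with $\gamma_A$ on the component $C=V(A)\sm\bar X$ for each $A$. This is well defined because the sets $C$ partition $V(G)\sm\bar X$ while all the $\gamma_A$ agree with $\eta$ on $\bar X$, and it is a bijection onto $V(H)$ because $\alpha$ is a bijection---so the sets $\alpha(A)\sm\bar Y$ are precisely the components of $H-\bar Y$---and each $\gamma_A$ is a bijection. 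The step to carry out carefully here is checking that $\phi$ is a \emph{graph} isomorphism: every edge and non-edge of $G$ lies either inside $G[\bar X]$, or inside a single $A\in\C_G$, or between two distinct components of $G-\bar X$ (in which case it is a non-edge). In the first two cases it is handled correctly by $\eta$ and by $\gamma_A$, and in the last case it maps to a pair in two distinct components of $H-\bar Y$, hence again to a non-edge; so both edges and non-edges are preserved. Since $\phi(X)=\eta(X)=Y$, this is an isomorphism $(G,X)\to(H,Y)$.

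Finally, the \emph{Moreover} statement falls straight out of this second construction: the $\phi$ built above satisfies $\phi\restriction\bar X=\eta$ and $\phi(A)=\alpha(A)$ for every $A\in\C_G$ by definition. Thus when \ref{isotuples} and \ref{compbij} both hold, I would simply take the $\alpha,\eta$ provided by \ref{compbij} and the $\phi$ produced from them by the construction above, which gives a compatible triple $(\alpha,\eta,\phi)$ of the required form.
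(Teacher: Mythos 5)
Your proposal is correct and takes essentially the same route as the paper: for \ref{isotuples}$\Rightarrow$\ref{compbij} it restricts the isomorphism to $\bar X$ and to the members of $\C_G$ and transfers the multiplicities $|I_G(A)|$ by the same conjugation argument (the paper's $\phi\circ\gamma\circ\phi^{-1}$), and for \ref{compbij}$\Rightarrow$\ref{isotuples} it glues the component isomorphisms agreeing with $\eta$ on $\bar X$ into a single map $\phi$, from which the \emph{Moreover} part is read off exactly as in the paper. Your extra verifications (that $\gamma(\bar X)=\bar Y$ via Lemma~\ref{fixedkernels}, and that non-edges between distinct components are preserved) merely make explicit what the paper leaves implicit.
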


\begin{proof}
  First assume that \ref{isotuples} holds and let $\phi:(G,X)\to(H,Y)$
  the isomorphism certifying this fact. Put $\eta:=\phi\restriction
  \bar X$. Observe that, by Lemma~\ref{fixedkernels}, for
  every $A\in\mathcal C_G$ there is a $B\in\C_H$ with $\phi(A)=B$; set
  $\alpha(A):=B$. Clearly, $\alpha$ is a bijection and $(A,\bar X)
  \simeq_\eta(\alpha(A),\bar Y)$. It remains to show that
  $|I_G(A)|=|I_H(\alpha(A))|$ for all $A\in\C_G$. Indeed, for every $C\in
  I_G(A)$ we have $\alpha(C)\in I_H(\alpha(A))$: Since $(A,\bar X)\simeq_\id
  (C,\bar X)$, by virtue of an isomorphism $\gamma$ say, $\phi\circ
  \gamma\circ\phi^{-1}$ is an isomorphism certifying $(\alpha(A),\bar Y)
  \simeq_\id(\alpha(C),\bar Y)$. Hence we obtain $|I_H(\alpha(A))| \ge |I_G(A)|$ and
  analogously $|I_G(A)| \ge |I_H(\alpha(A))|$.
  
  Now assume that \ref{compbij} holds. Then for every $A\in\C_G$ there
  is an isomorphism $\phi_A:A\to\alpha(A)$ that witnesses $(A,\bar X)
  \simeq_\eta  (\alpha(A),\bar Y)$. Now the function $\phi:G\to H$
  defined by $\phi\restriction A:= \phi_A$ for every $A$ is an
  isomorphism of $(G,X)$ and $(H,Y)$ satisfying $\phi\restriction\bar X=\eta$ and $\phi(A)=\alpha(A)$ for every $A\in\C_G$.
\end{proof}

We call the tuple $(G,X)$ \emph{connected} if $G-X$ is connected.

\begin{lemma}\label{mainlem}
  Let $(G,X)$ and $(H,Y)$ be strong twins, where $G$ and $H$ are
  rayless graphs, and $X\subseteq V(G)$ and $Y\subseteq V(H)$ finite.
  Then $(G,X)$ has infinitely many strong twins. If both $(G,X)$ and
  $(H,Y)$ are connected, then $(G,X)$ has infinitely many connected
  strong twins.
\end{lemma}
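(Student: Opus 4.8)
The plan is to prove that $(G,X)$ has infinitely many strong twins by induction on $\rk(G)$, exploiting the recursive decomposition of $G$ into the pieces $A \in \C_G$ hanging off the finite set $\bar X$. The key tool is Lemma~\ref{isolem}, which reduces the isomorphism type of $(G,X)$ to three pieces of combinatorial data: the isomorphism type of $G[\bar X]$, the multiset of isomorphism types $(A,\bar X)$ with their multiplicities $|I_G(A)|$, and how these match up under a bijection $\alpha:\C_G\to\C_H$. Since $(G,X)$ and $(H,Y)$ are twins but \emph{not} isomorphic, normalizing $\phi,\psi$ as in~\eqref{idonX}, the composite embedding $\iota=\psi\circ\phi$ fixes $\bar X$ pointwise and embeds $G$ properly into itself. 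The failure of isomorphism must therefore show up in one of the three invariants above.

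I would split into cases according to where the asymmetry lives. First, if some type $(A,\bar X)$ occurs infinitely often (i.e.\ $|I_G(A)|$ is infinite), then deleting finitely many copies of $A$ from $G$ changes neither $\rk(G)$ nor the isomorphism data recorded by Lemma~\ref{isolem}, so the resulting graph is isomorphic to $(G,X)$ --- but one then uses the proper self-embedding $\iota$ together with adding back finitely many copies to manufacture a genuine twin, and since a finite number can be varied, one gets infinitely many pairwise non-isomorphic twins. Second, if all types occur only finitely often, then because $(G,X)$ and $(H,Y)$ are non-isomorphic twins, the embeddings force some component-type of strictly smaller rank to itself form a twin pair $(A,\bar X)$, $(B,\bar Y)$; here I would apply the induction hypothesis to obtain infinitely many twins of that smaller-rank tuple and reassemble $G$ with each of these substituted in, using Lemma~\ref{isolem} to certify that the results are pairwise non-isomorphic and each twinned with $(G,X)$.

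For the connected statement, the same scheme applies but one must keep $G-X$ connected throughout. The point is that when $(G,X)$ and $(H,Y)$ are connected, $\bar X=X\cup K(G)$ is nonempty (the kernel of a connected graph is nonempty), so every piece $A=G[C\cup\bar X]$ shares the common attaching set $\bar X$; modifying or substituting components and then gluing them back along $\bar X$ preserves connectivity of the whole minus $X$. Thus the infinitely many twins produced in either case can be arranged to be connected.

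The main obstacle I expect is the second case: extracting from the mere \emph{non}-isomorphism of twinned tuples a \emph{smaller-rank} sub-tuple that is itself a twin pair, so that induction can be applied. Twinhood is a statement about the whole graphs, and one must argue that if no type occurs infinitely often and the $\bar X$-isomorphism and the matching are forced, then the embeddings $\phi,\psi$ restrict, on corresponding components, to embeddings witnessing a twin relation one rank lower --- and that the inequality of multiplicities or the failure of $\alpha$ to be an isomorphism localizes to a single such component. Making this localization precise, and ensuring the reassembled graphs remain pairwise non-isomorphic (again via the criterion of Lemma~\ref{isolem}), is the delicate bookkeeping at the heart of the proof.
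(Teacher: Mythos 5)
Your overall frame --- induction on the rank, Lemma~\ref{isolem} as the isomorphism criterion, the normalization~\eqref{idonX} --- matches the paper, but both branches of your case analysis have genuine gaps. In your first case, adding or deleting \emph{finitely} many copies of a type $(A,\bar X)$ with $|I_G(A)|$ infinite changes none of the invariants of Lemma~\ref{isolem}, so every graph produced this way is isomorphic to $(G,X)$: you manufacture no twins at all. What actually works (and is what the paper does) is to delete all but finitely many copies, so that the new multiplicity $|I_{G_i}(A)|=i$ is finite; the delicate point is then to embed $G$ back into the pruned graph, which requires a strong self-embedding $\beta$ of $(G,X)$ whose image meets $\bigcup I_G(A)$ only in $\bar X$. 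The paper builds $\beta$ from the digraph $\Gamma$ on $\C_G$ (an edge $(C,D)$ when $\iota(C)\subseteq D$), the set $\mathcal A$ of copies of $A$ whose $\iota$-image leaves $I_G(A)$, the acyclicity statement~\eqref{indacyc}, and the inequality $|I_G(A)|>|I_H(B)|$, which is extracted from the \emph{non-isomorphism} of the twins via Lemma~\ref{isolem} --- not merely from the infinitude of a class. Nothing in your sketch supplies such a self-embedding; $\iota$ alone may map copies of $A$ back into copies of $A$.

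Your second case rests on a claim that is false: when all multiplicities are finite, non-isomorphism of the twins need \emph{not} localize to a smaller-rank twin pair. For example, let $G$ be a centre with one pendant path of each length $n\in\{1,3,4,5,\ldots\}$ and $H$ the same with one path of every length $n\ge 1$. These are non-isomorphic strong twins ($H$ embeds into $G$ by shifting each path of length $\ge 2$ into an initial segment of a longer one, and the image is induced), yet every component tuple $(C,\bar X)$ is finite and hence twin-free, and every multiplicity is $0$ or $1$; the asymmetry lives entirely in the multiplicities of isomorphic types, exactly the situation $|I_G(A)|\neq|I_H(B)|$. Accordingly, the paper's case split differs from yours: the induction hypothesis is invoked precisely when some $(C_0,\bar X)$ \emph{does} have a connected twin (one then replaces the whole $\simeq_\id$/twin-class of $C_0$ by each of the infinitely many smaller-rank twins, and distinguishes the results via $|I_{G_k}(C_j)|$ and the finitely many automorphisms of $\bar X$); in the remaining case~\eqref{isotwin} one cannot recurse at all and must run the $\Gamma$/$\mathcal A$ argument, including the counting step that two copies $A_1,A_2\in I_G(A)$ mapped by $\phi$ into a single $B'\in\C_H$ force $B'\notin I_H(B)$ (by comparing orders, or kernel sizes, using $\phi(A_1)\cap\phi(A_2)=\bar Y$). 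These are the missing ideas; your remarks on preserving connectivity by gluing along the nonempty set $\bar X$ are, by contrast, essentially right.
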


Before we prove the lemma let us remark that it immediately implies 
the strong version of Theorem~\ref{mainthm} if we set $X=Y=\emptyset$.

\begin{proof}[Proof of Lemma~\ref{mainlem}]
  We proceed by transfinite induction on the rank of $G$. For rank $0$
  the statement is trivially true as finite graphs do not have twins.
  We may thus assume that $G$ has rank $\kappa>0$ and that the lemma is true
  for rank smaller than~$\kappa$.
  
  Assume there exists a $C_0\in\C_G$ so that $(C_0,\bar X)$ has a
  connected twin. Then, as $C_0$ has rank smaller than $\kappa$, the
  inductive hypothesis provides us with infinitely many connected
  twins $(C_i,X_i),~i>0$, of $(C_0,\bar X)$. By applying~\eqref{idonX}
  to $(C_0,\bar X)$ and $(C_i,X_i)$ we may assume that the restrictions
  to $\bar X$ and $X_i$, respectively, of the mutual embeddings are
  inverse isomorphisms. Hence, by identifying $X_i$ with $\bar X$ by
  this isomorphism we may assume that the twins have the
  form $(C_i,\bar X)$ and that the corresponding embeddings
induce the identity on $\bar X$.
  Denote by $\mathcal T$
  the set of $C\in\mathcal C_G$ for which either $(C,\bar X)\simeq_\id
  (C_0,\bar X)$, or for which $(C,\bar X)$ is a twin of $(C_0,\bar X)$
  by virtue of mutual embeddings that each induce the identity on $\bar X$.
  For every~$i\in\mathbb N$, define $G_i$ to be the graph obtained
  from $G$ by replacing every $C\in\mathcal T$ by a copy of $C_i$. 

The construction ensures two properties. First, 
there are strong embeddings $(G,X)\to (G_i,X)$ and $(G_i,X)\to (G,X)$
for every $i$. So, if infinitely many of the $(G_i,X)$ are non-isomorphic,
we have found infinitely many twins of $(G,X)$.
Second, 
for $j\neq k$
it follows that $|I_{G_k}(C_j)|=0\not= |I_{G_j}(C_j)|$. Consequently,
Lemma~\ref{isolem} implies
\begin{equation}
\label{notid}
(G_j,\bar X)\not\simeq_\id (G_k,\bar X).
\end{equation} 

Assume that for distinct $i,j,k$ the tuples
$(G_i,X)$, $(G_j,X)$ and $(G_k,X)$ are isomorphic. Thus, by Lemma~\ref{isolem} there are
isomorphisms $\eta$
between $\bar X\subseteq V(G_i)$ and $\bar X\subseteq V(G_j)$
and $\eta'$ between $\bar X\subseteq V(G_i)$ and $\bar X\subseteq V(G_k)$
so that $(G_i,\bar X)\simeq_\eta (G_j,\bar X)$ and 
$(G_i,\bar X)\simeq_{\eta'} (G_k,\bar X)$.
Now, if $\eta=\eta'$, then the resulting isomorphism between $G_j$ and $G_k$
would induce the identity on $\bar X$, which is impossible by~\eqref{notid}.
  As there are only finitely many automorphisms of the
  finite set $\bar X$, we deduce that each $(G_i,X)$ is isomorphic to only finitely many
  $(G_j,X)$. Therefore we can easily find among the $(G_i,X)$
  infinitely many that are pairwise non-isomorphic.
  
  Finally, we claim that if $(G,X)$, i.e.\ $G-X$, is connected, then so is each
  $(G_i,X)$, i.e.\ $G_i-X$. Indeed, by construction there is an embedding
  $(G,X) \to (G_i,X)$ that restricts to the identity on $\bar X$ and whose image meets
  all components of $G_i-\bar X$. As $G-X$ is connected, as well as each
  component of $G_i-\bar X$, we deduce that $G_i-X$ is connected.

  Thus, we may assume from now on that
  \begin{equation}\label{isotwin}
  	\emtext{
  	  for each $C\in\C_G$, $(C,\bar X)$ has no connected twin.
	}
  \end{equation}
  By symmetry, the same holds for $(H,Y)$.
  
Let $\phi:(G,X)\to (H,Y)$ and $\psi:(H,Y)\to (G,X)$ be strong embeddings, and 
recall that by~\eqref{idonX} we may assume that 
$\iota:=\psi\circ\phi$ induces the identity map on $\bar X$.
  By Lemma~\ref{isolem} and symmetry, we may assume that for $\eta:=
  \phi\restriction\bar X$ there are $A\in\C_G$ and $B\in\C_H$ with
  $(A,\bar X)\simeq_\eta(B,\bar Y)$ so that $|I_G(A)|>|I_H(B)|$.
  
  Observe that by Lemma~\ref{fixedkernels}
  \begin{equation}\label{components}
	\emtext{
	  for every $C\in\C_G$ there is a (unique) $D\in\C_G$ with
	  $\iota(C)\subseteq D$.
	}
  \end{equation}
  Furthermore, we point out that $\iota$ is a strong self-embedding of
  $(G,X)$, and also of $(G,\bar X)$.
  
  We define a directed graph $\Gamma$ on $\C_G$ as vertex set by
  declaring $(C,D)$ to be an edge if $\iota(C)\subseteq D$ for $C,D
  \in\C_G$. We do allow $\Gamma$ to have loops and parallel edges
  (which then, necessarily, are pointing in opposite directions).
  Note that by~\eqref{components} every vertex in $\Gamma$ has out-degree one. Define
  $\mathcal A$ to be the set of those $A'\in I_G(A)$ for which the
  unique out-neighbour does not lie in $I_G(A)$.
  
  Suppose that distinct $A_1,A_2\in I_G(A)$ are mapped by $\phi$ into
  the same $B'\in\C_H$. If $A_1$ (and then also $A_2$) is finite, then
  $|V(B')|>|V(A_i)|$ for $i=1,2$ since the injectivity of $\phi$
  implies $\phi(A_1)\cap \phi(A_2) = \bar Y$. Consequently, we obtain
  $B'\notin I_H(B)$. Let now $A_1$ and  $A_2$ be infinite. Unless
  $\rk(B')>\rk(A_1)=\rk(A_2)$ it follows that $\phi(K(A_i))\subseteq
  K(B')$ for $i=1,2$. Since $A_1-\bar X$ and $A_2-\bar X$ are
  connected the kernels $K(A_i-\bar X)$ are non-empty (but finite).
  Again from $\phi(A_1)\cap \phi(A_2) = \bar Y$ we obtain that $K(B')$
  has larger cardinality than either of $K(A_1)$ and $K(A_2)$, which
  implies $B'\notin I_H(B)$. Therefore, we have in all cases that
  $B'\notin I_H(B)$. Since~\eqref{isotwin} and~\eqref{components}
  necessitate that $\phi(A')$
  is contained in an element of $I_H(B)$ for every $A'\in I_G(A) \sm
  \mathcal A$ we deduce that $|\mathcal A|\ge |I_G(A)|-|I_H(B)|$.
  Thus, it holds that
  \begin{equation}\label{freespace}
	\emtext{
	  $\mathcal A\neq\emptyset$, and if $I_G(A)$ is infinite, then we have
	  $|\mathcal A|=|I_G(A)|$.
	}
  \end{equation}
  
  \noindent
  By construction, the set $\mathcal A$ is independent in $\Gamma$.
  Moreover,
  \begin{equation}\label{indacyc}
	\begin{minipage}[c]{0.8\textwidth}\em
	  there is no directed path in $\Gamma$ starting in $\mathcal A$
	  and ending in $I_G(A)$, and there is no directed cycle containing any
	  $A'\in\mathcal A$.
	\end{minipage}\ignorespacesafterend 
  \end{equation} 
  To prove~\eqref{indacyc}, suppose that $C_1,\ldots,C_k$ is a directed
  path in $\Gamma$ with $C_1\in\mathcal A$ and $C_k\in I_G(A)$ (possibly
  even $C_k\in\mathcal A$). Since repeated application of $\iota$
  maps every $(C_1,\bar X)$ into any $(C_i,\bar X)$ and likewise
  $(C_i,\bar X)$ into $(C_k,\bar X)\simeq_\id (C_1,\bar X)$, we deduce
  that $(C_i,\bar X)\simeq_\id (C_j,\bar X)$ for $i,j\in\{1,\ldots,k\}$, as they cannot be
  twins by~\eqref{isotwin} (recall that $\iota\restriction\bar X=\id_{\bar X}$ by~\eqref{idonX}). 
However, $(C_1,\bar X)\simeq_\id (C_2,\bar X)$ violates $C_1\in\mathcal A$.
The same arguments hold if $C_1,\ldots,C_k$ is a
  directed cycle that meets~$\mathcal A$.
  
  Define $\mathcal A^-$ to be the set of all $C\in\mathcal C_G$ from
  which there is a nontrivial directed path in $\Gamma$ ending in
  $I_G(A)$ (in particular, $I_G(A)\sm\mathcal A \subseteq \mathcal
  A^-$). Setting $\mathcal A^+:=\mathcal C_G\setminus(\mathcal
  A\cup\mathcal A^-)$ we see with~\eqref{indacyc} that $(\mathcal A^-,
  \mathcal A,\mathcal A^+)$ partitions~$\mathcal C_G$. By definition,
  the out-neighbour of an $A'\in\mathcal A$ does not lie in $\mathcal
  A$, and by~\eqref{indacyc} the out-neighbour does not lie in
  $\mathcal A^-$ either. Hence, we have $\iota(A')\subseteq\bigcup
  \mathcal A^+$. On the other hand, the definition of $\mathcal A^-$
  implies that the out-neighbour of every $A^+\in\mathcal A^+$ is
 contained in $\mathcal A^+$. Thus it follows that $\iota(A^+)
  \subseteq \bigcup\mathcal A^+$. In summary, we obtain:
  \begin{equation}\label{plusminus}
	\emtext{
	  $(\mathcal A^-,\mathcal A,\mathcal A^+)$ partitions~$\mathcal
	  C_G$ and $\iota\left(\bigcup\mathcal A\cup\bigcup\mathcal A^+\right)
	  \subseteq\bigcup\mathcal A^+.$
	}
  \end{equation}
  
  We claim that there exists a strong self-embedding $\gamma:(G,X)\to
  (G,X)$ that induces the identity on $\bigcup I_G(A)\sm\mathcal A$ (in
  particular on $\bar X$) and satisfies
  \begin{equation}\label{makefree}
	\emtext{
	  $\gamma(G)\cap\bigcup\mathcal A=\bar X$.
	}
  \end{equation}
  On $\bar X$ we define $\gamma$ to be the identity. For every other
  vertex $v\in V(G)$ we consider the unique $C\in\C_G$ containing $v$.
  If $C\in\mathcal A^-$ we set $\gamma(v):=v$, and if $C\in\mathcal A
  \cup\mathcal A^+$ we put $\gamma(v):=\iota(v)$. Note that
  by~\eqref{components} it holds that for every $C\in\C_G$ we have $\gamma
  \restriction C=\id_C$ or $\gamma\restriction C=\iota\restriction C$.
  It is immediate from~\eqref{plusminus} that~\eqref{makefree} holds.
  Moreover, since the identity as well as $\iota$ are strong
  self-embeddings it follows from~\eqref{plusminus} that $\gamma$ is
  one, too.
  
  If $I_G(A)$ is infinite, then by~\eqref{freespace} we change $\gamma$
  on each component in $I_G(A)\sm\mathcal A$ so as to obtain a strong
  self-embedding $\varphi$ whose image avoids $\bigcup I_G(A)\sm\mathcal
  A - \bar X$. Then $\beta:=\varphi^2$ is a strong self-embedding that
  induces the identity on $\bar X$ and satisfies
  \begin{equation}\label{makefreeinf}
	\emtext{
	  $\beta(G)\cap\bigcup I_G(A)=\bar X$.
	}
  \end{equation}
  
  \medbreak
  
  Let us now construct infinitely many strong twins of $(G,X)$. Assume
  first that $I_G(A)$ is a finite set. Add a disjoint copy $\tilde A$
  of $A$ to $G$ and identify every vertex in $\bar X$ with its copy in
  $\tilde A$. The resulting graph $G_1$ is clearly a supergraph of
  $G$. But by~\eqref{makefree} we can also embed $(G_1,X)$ in $(G,X)$:
  extend $\gamma$ to an
  embedding of $(G_1,X)$ in $(G,X)$ by mapping $\tilde A-\bar X$ to $A'-
  \bar X$ for some $A'\in\mathcal A$. Here, we use that $\mathcal A
  \neq\emptyset$, by~\eqref{freespace}. Note that $|I_{G_1}(A)|=
  |I_G(A)|+1$. Now we repeat this process, with $G_1$ in the role of
  $G$, so as to obtain $G_2$, and so on. Since $|I_{G_i}(A)| \not=
  |I_{G_j}(A)|$ for all $i \not= j$, we can deduce from
  Lemma~\ref{isolem}, as in the proof of \eqref{isotwin}, that
  each $(G_i,X)$ is isomorphic to only finitely
  many $(G_j,X)$. Therefore we can find among the $(G_i,X)$ infinitely
  many twins of $(G,X)$.
  
  So, consider the case when $I_G(A)$ contains infinitely many
  elements $A_1,A_2,\ldots$. Set $G_i:=G-(\bigcup I_G(A)\sm\{A_1,
  \ldots,A_i\}-\bar X)$ for $i\in\mathbb N$. Since,
  by~\eqref{makefreeinf}, $\beta$ can be used to embed $(G,X)$ in
  $(G_i,X)$ we can again find infinitely many twins of $(G,X)$---note
  that $|I_{G_i}(A)|$ takes different (finite) values.
  
  Finally, observe that in both cases, all the strong twins we
  constructed are connected if $(G,X)$ is.
\end{proof}

\bibliographystyle{amsplain}
\bibliography{collective}

\small
\vskip2mm plus 1fill
Version 19 Nov 2009
\bigbreak

\begin{tabular}{cc}
\begin{minipage}[t]{0.5\linewidth}
Anthony Bonato
{\tt <abonato@ryerson.ca>}\\
Ryerson University\\
350 Victoria Street\\
 Toronto, Ontario\\
 M5B 2K3\\
Canada
\end{minipage}
&
\begin{minipage}[t]{0.5\linewidth}
Henning Bruhn
{\tt <hbruhn@gmx.net>}\\
Reinhard Diestel\\
Phillip Spr\"ussel\\
{\tt <spruessel@math.uni-hamburg.de>}\\
Mathematisches Seminar\\
Universit\"at Hamburg\\
Bundesstra\ss e 55\\
20146 Hamburg\\
Germany
\end{minipage}
\end{tabular}

\end{document}